% LaTEX source code
% Last modified November 1st, 2005
% Steve Miller

% note that the percent sign comments out the rest of the line
% first, we set a document class. often use 12pt characters, though
% sometimes people do 11 or 10. you can do report or article, both similar

%\documentclass[12pt,letterpaper]{article}
\documentclass[reqno]{amsart}

\addtolength{\textwidth}{2cm}
\addtolength{\hoffset}{-1cm}
\addtolength{\marginparwidth}{-1cm} \addtolength{\textheight}{2cm}
\addtolength{\voffset}{-1cm}
\setlength{\parindent}{0pt}

% below are some packages that are needed for certain symbols, graphics, colors.
% safest to just include these.

%%%%%%%%%%%%%%%%%%%%%%%%%%%%%%%%%%
%%This block not installed in my computer-AB
%\usepackage{mathtools}
%\usepackage{listofitems}
%\usepackage{physics}
%\usepackage{breakurl}
%%%%%%%%%%%%%%%%%%%%%%%%%%%%%%%%%%

%%%%%%%%%%%%%%%%%%%%%%%%%%%%%%%%%%
%ADDED BY AB
%%%%%%%%%%%%%%%%%%%%%%%%%%%%%%%%%%

 %%BECAUSE I DO NOT HAVE MATHTOOLS

\usepackage{amsbsy}
%%TWO commands not running due to absence of mathtools-- abs (absolute value) and norm
%%%%%%%%%%%%%%%%%%%%%%%%%%%%%%%%%%

\usepackage{times}
\usepackage[T1]{fontenc}
\usepackage{mathrsfs}
\usepackage{latexsym}
\usepackage[dvips]{graphics}
\usepackage{epsfig}
\usepackage{flafter,epsf}
\usepackage{amsmath,amsfonts,amsthm,amssymb,amscd}
\usepackage{color}
\usepackage{fix-cm}
\usepackage{hyperref}
\hypersetup{colorlinks=true,linkcolor=blue}
\usepackage{color}
\usepackage{url}
\newcommand{\bburl}[1]{\textcolor{blue}{\url{#1}}}

\usepackage{comment}
\usepackage{multirow}

%=======================================================
%   THIS IS WHERE YOU PUT SHORTCUT DEFINITIONS
%========================================================

% Note that we use a percent sign to comment out a line
% below are shortcut commands

%%%%%%%%%%%%%%%%%%%%%%%%%%%%%%%%%%%%%%%%%%%%%%%
% below are shortcuts for equation, eqnarray,
% itemize and enumerate environments

\newcommand\be{\begin{equation}}
\newcommand\ee{\end{equation}}
\newcommand\bea{\begin{eqnarray}}
\newcommand\eea{\end{eqnarray}}
\newcommand\bi{\begin{itemize}}
\newcommand\ei{\end{itemize}}
\newcommand\ben{\begin{enumerate}}
\newcommand\een{\end{enumerate}}

%%%%%%%%%%%%%%%%%%%%%%%%%%%%%%%%%%%%%%%%%%%%%%%%
% Theorem / Lemmas et cetera

\newtheorem{thm}{Theorem}[section]

\newtheorem{lem}[thm]{Lemma}

\theoremstyle{definition}

\theoremstyle{definition}

\theoremstyle{definition}

%%%%%%%%%%%%%%%%%%%%%%%%%%%%%%%%%%%%%%%%%
% shortcuts to environments
% this allows you to do textboldface: simply type \tbf{what you want in bold}

%%%%%%%%%%%%%%%%%%%%%%%%%%%%%%%%%%%%%%%%%%%%%%%%%%
% shortcut to twocase and threecase definitions

\newcommand{\twocase}[5]{#1 \begin{cases} #2 & \text{#3}\\ #4
&\text{#5} \end{cases}   }

%%%%%%%%%%%%%%%%%%%%%%%%%%%%%%%%%%%%%%%%%
%Blackboard Letters

  %use in linux

%%%%%%%%%%%%%%%%%%%%%%%%%%%%%%%%%%%%%%%%%
% Finite Fields and Groups

%%%%%%%%%%%%%%%%%%%%%%%%%%%%%%%%%%%%%%%%%
% Cycle Permutation

\newcommand\cycle[2][\,]{%
  \readlist\thecycle{#2}%
  (\foreachitem\i\in\thecycle{\ifnum\icnt=1\else#1\fi\i})%
}

%%%%%%%%%%%%%%%%%%%%%%%%%%%%%%%%%%%%%%%%%
% Fractions

  %onehalf

%%%%%%%%%%%%%%%%%%%%%%%%%%%%%%%%%%%%%%%%%
% Legendre Symbols

%%%%%%%%%%%%%%%%%%%%%%%%%%%%%%%%%%%%%%%%%
% matrix shortcuts

\newcommand{\abs}[1]{\left|#1\right|}

%%%%%%%%%%%%%%%%%%%%%%%%%%%%%%%%%%%%%%%%%
% general functions
               % gives the not divide symbol

%%%%%%%%%%%%%%%%%%%%%%%%%%%%%%%%%%%%%%%%%
% trace of a matrix

%%%%%%%%%%%%%%%%%%%%%%%%%%%%%%%%%%%%%%%%%
% greek letter shortcuts

                  %gives you a greek alpha

    %capital lamda

\numberwithin{equation}{section}

%%%%%%%%%%%%%%%%%%%%%%%%%%%%%%%%%%%%%%%%%
% make indentation in table of contents
\makeatletter
\def\@tocline#1#2#3#4#5#6#7{\relax
  \ifnum #1>\c@tocdepth % then omit
  \else
    \par \addpenalty\@secpenalty\addvspace{#2}%
    \begingroup \hyphenpenalty\@M
    \@ifempty{#4}{%
      \@tempdima\csname r@tocindent\number#1\endcsname\relax
    }{%
      \@tempdima#4\relax
    }%
    \parindent\z@ \leftskip#3\relax \advance\leftskip\@tempdima\relax
    \rightskip\@pnumwidth plus4em \parfillskip-\@pnumwidth
    #5\leavevmode\hskip-\@tempdima
      \ifcase #1
       \or\or \hskip 1em \or \hskip 2em \else \hskip 3em \fi%
      #6\nobreak\relax
    \hfill\hbox to\@pnumwidth{\@tocpagenum{#7}}\par% <---- \dotfill -> \hfill
    \nobreak
    \endgroup
  \fi}
\makeatother

\begin{document}

\title{The Real Schwarz Lemma: The Sequel}

\author[Baily]{Benjamin Baily}
\email{bmb2@williams.edu}
\address{Department of Mathematics, Williams College, MA 01267}

\author[Geller]{Jonathan Geller}
\email{jmg8@williams.edu}
\address{Department of Mathematics, Williams College, MA 01267}

\author[Miller]{Steven J. Miller}
\email{sjm1@williams.edu, Steven.Miller.MC.96@aya.yale.edu}
\address{Department of Mathematics, Williams College, MA 01267}

\subjclass[2020]{26-01}

\keywords{Schwarz lemma, real analytic}

\maketitle

\begin{abstract} A decade ago, when teaching complex analysis, the third named author posed the question on whether or not there is an analogue to the Schwarz lemma for real analytic functions. This led to the note \cite{MT}, indicating that it is possible to have a real analytic automorphism $f$ of $(-1,1)$ with $f'(0)$ arbitrarily large. In this note we provide other families with this property, and moreover show that we can always find such a function so that $f'(0)$ equals any desired real number. We end with some questions on related problems.
\end{abstract}

\tableofcontents

%%%%%%%%%%%%%%%%%%%%%%%%%%%%%%%%%%%%%%%%%%%%%%%%%%%%%%%%%%%%%%%%%%%%%%%%%%%%%%%%%%%%%%%%%%%%%%%%%%%%%%%%%%%%%%%%%%%%%%%%%%%%%%%%%%%%%%%%%%%%%%%%%%%
%%%%%%%%%%%%%%%%%%%%%%%%%%%%%%%%%%%%%%%%%%%%%%%%%%%%%%%%%%%%%%%%%%%%%%%%%%%%%%%%%%%%%%%%%%%%%%%%%%%%%%%%%%%%%%%%%%%%%%%%%%%%%%%%%%%%%%%%%%%%%%%%%%%
%%%%%%%%%%%%%%%%%%%%%%%%%%%%%%%%%%%%%%%%%%%%%%%%%%%%%%%%%%%%%%%%%%%%%%%%%%%%%%%%%%%%%%%%%%%%%%%%%%%%%%%%%%%%%%%%%%%%%%%%%%%%%%%%%%%%%%%%%%%%%%%%%%%
\section{Introduction}

One of the gems of complex analysis is the Riemann Mapping Theorem, which states that any simply connected proper subset of the complex plane $\Omega$ is conformally equivalent to the unit disk $\mathbb{D}$; for details see, for example, \cite{SS}. Explicitly, there is a complex holomorphic map $f:\Omega \to \mathbb{D}$ which is a bijection, and whose inverse is also holomorphic. Much more is true about such functions, as complex differentiable functions are infinitely differentiable and analytic (equalling their power series expansion). One of the key ingredients in the proof is the following.

\begin{lem}[Schwarz lemma] If $f$ is a holomorphic map of the unit disk to itself that fixes the origin, then $|f'(0)| \le 1$; further, if $|f'(0)| = 1$ then $f$ is an automorphism (in fact, a rotation).
\end{lem}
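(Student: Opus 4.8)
The plan is to introduce the auxiliary function $g$ defined by $g(z) = f(z)/z$ for $z \neq 0$ and $g(0) = f'(0)$. The first step is to check that $g$ is holomorphic on all of $\mathbb{D}$: since $f$ is holomorphic with $f(0) = 0$, its Taylor expansion at the origin has no constant term, so after dividing by $z$ one still obtains a convergent power series on $\mathbb{D}$. In other words, the apparent singularity at $0$ is removable and the natural value there is exactly $g(0) = f'(0)$.

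Next I would apply the maximum modulus principle on each closed subdisk $\{|z| \le r\}$ with $r < 1$. On the boundary circle $|z| = r$ we have $|g(z)| = |f(z)|/r \le 1/r$, using that $f$ maps $\mathbb{D}$ into itself, so $|f| < 1$. Hence $|g(z)| \le 1/r$ on the whole subdisk. Now fix an arbitrary $z \in \mathbb{D}$ and let $r \to 1^-$: this yields $|g(z)| \le 1$ for every $z \in \mathbb{D}$. In particular $|f(z)| \le |z|$ for all $z$, and taking $z \to 0$ (or just reading off $g(0)$) gives $|f'(0)| = |g(0)| \le 1$, which is the first claim.

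For the rigidity statement, suppose $|f'(0)| = 1$. Then $|g(0)| = 1$, so the holomorphic function $g$ attains its maximum modulus at the interior point $0 \in \mathbb{D}$. By the maximum modulus principle, $g$ must be constant, say $g \equiv c$ with $|c| = 1$. Writing $c = e^{i\theta}$, we conclude $f(z) = e^{i\theta} z$, which is indeed a bijective holomorphic self-map of $\mathbb{D}$ with holomorphic inverse, i.e., an automorphism, and in fact a rotation.

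The main thing to be careful about is the limiting step $r \to 1^-$: the bound $1/r$ is uniform only on a fixed subdisk, so one applies the estimate pointwise at a fixed $z$ and then lets $r$ increase. The other point requiring the precise statement of a theorem is the equality case, where one invokes the form of the maximum modulus principle asserting that a nonconstant holomorphic function on a domain cannot attain a local maximum of its modulus in the interior. Beyond these, the argument is routine.
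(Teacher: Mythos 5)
Your proof is correct: the removable-singularity argument for $g(z)=f(z)/z$, the estimate $|g|\le 1/r$ on $|z|\le r$ followed by $r\to 1^-$, and the interior-maximum rigidity step are exactly the standard argument. Note that the paper itself states the Schwarz lemma without proof (deferring to the cited text of Stein and Shakarchi), and your argument coincides with the classical proof given there, so there is nothing to add or repair.
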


When teaching complex analysis at Williams in Fall 2010, the third named author asked the class whether or not similar results hold for real analytic isomorphisms from $(-1,1)$. With David Thompson \cite{MT} he showed that, as is true with many problems in complex analysis, the real case exhibits very different behavior. In particular, given any $B$ one can find a real analytic isomorphism $f$ with $f'(0) > B$; the example they used was $f_k(x) := {\rm erf}(kx)/{\rm erf}(k)$, where ${\rm erf}$ is the error function (related to the area under normal distributions): $${\rm erf}(x) \ := \ \frac{2}{\sqrt{\pi}} \int_0^x e^{-t^2} dt.$$

In this note we provide two additional families where the derivatives can be arbitarily large at the origin, and show further that we can always find a real analytic automorphism of $(-1, 1)$ with derivative $a$ at the origin. We end with a discussion of related problems.

%%%%%%%%%%%%%%%%%%%%%%%%%%%%%%%%%%%%%%%%%%%%%%%%%%%%%%%%%%%%%%%%%%%%%%%%%%%%%%%%%%%%%%%%%%%%%%%%%%%%%%%%%%%%%%%%%%%%%%%%%%%%%%%%%%%%%%%%%%%%%%%%%%%
%%%%%%%%%%%%%%%%%%%%%%%%%%%%%%%%%%%%%%%%%%%%%%%%%%%%%%%%%%%%%%%%%%%%%%%%%%%%%%%%%%%%%%%%%%%%%%%%%%%%%%%%%%%%%%%%%%%%%%%%%%%%%%%%%%%%%%%%%%%%%%%%%%%
%%%%%%%%%%%%%%%%%%%%%%%%%%%%%%%%%%%%%%%%%%%%%%%%%%%%%%%%%%%%%%%%%%%%%%%%%%%%%%%%%%%%%%%%%%%%%%%%%%%%%%%%%%%%%%%%%%%%%%%%%%%%%%%%%%%%%%%%%%%%%%%%%%%
\section{Iterated Compositions}

The general idea of the construction is the following: if we have a function that is a real analytic automorphism of $(-1,1)$ with derivative exceeding 1, then if we compose it with itself we get another real analytic automorphism whose derivative at the origin is square of the original. Thus if we keep iterating we can make the derivative at zero arbitrarily large. We give a proof at the end of the section that the composition of two real analytic functions is real analytic (Lemma \ref{lem:compositionrealanalytic}).

We state the following lemma in more generality than we need. In our application, the fixed point is the origin.

\begin{lem}
Let $h:(-1,1)\to (-1,1)$ be a real analytic automorphism with a fixed-point $x^\ast$. If we define $h_1 := h$ and $h_n := h\circ h_{n-1}$, then each function $h_n$ is a real analytic automorphism fixing $x^\ast$ and $h'_n(x^\ast) = (h'(x^\ast))^n$.
\end{lem}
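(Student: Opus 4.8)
The plan is to argue by induction on $n$, using the fact (assumed from Lemma~\ref{lem:compositionrealanalytic}, which the authors promise to prove at the end of the section) that a composition of real analytic functions is real analytic. The base case $n=1$ is immediate since $h_1 = h$ is by hypothesis a real analytic automorphism fixing $x^\ast$, and $h_1'(x^\ast) = h'(x^\ast) = (h'(x^\ast))^1$.

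For the inductive step, suppose $h_{n-1}$ is a real analytic automorphism of $(-1,1)$ fixing $x^\ast$ with $h_{n-1}'(x^\ast) = (h'(x^\ast))^{n-1}$. Then $h_n = h \circ h_{n-1}$ is a composition of two real analytic maps of $(-1,1)$ to itself, hence real analytic by Lemma~\ref{lem:compositionrealanalytic}. It is a bijection of $(-1,1)$ onto itself because it is a composition of bijections, and its inverse $h_{n-1}^{-1}\circ h^{-1}$ is again real analytic (the inverse of a real analytic automorphism with nonvanishing derivative is real analytic, by the real analytic inverse function theorem), so $h_n$ is a real analytic automorphism. Since $h(x^\ast) = x^\ast$ and $h_{n-1}(x^\ast) = x^\ast$, we get $h_n(x^\ast) = h(h_{n-1}(x^\ast)) = h(x^\ast) = x^\ast$, so $x^\ast$ is a fixed point. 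Finally, the chain rule gives
\[
h_n'(x^\ast) \ = \ h'\bigl(h_{n-1}(x^\ast)\bigr)\cdot h_{n-1}'(x^\ast) \ = \ h'(x^\ast)\cdot (h'(x^\ast))^{n-1} \ = \ (h'(x^\ast))^n,
\]
completing the induction.

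I expect the only real subtlety is bookkeeping about what "automorphism" requires: one must check not just bijectivity but that the inverse is also real analytic. This is handled by noting that an automorphism $h$ has $h'$ nowhere zero on $(-1,1)$ (otherwise it could not be injective, or at least monotonicity forces $h' \ge 0$ and a zero derivative together with analyticity and injectivity would have to be ruled out — in fact for a real analytic bijection of an interval, $h'$ is either everywhere positive or everywhere negative), whence the real analytic inverse function theorem applies. Everything else — the fixed point persisting under composition and the derivative formula — is a direct application of the chain rule and induction, so the proof is short once Lemma~\ref{lem:compositionrealanalytic} is in hand.
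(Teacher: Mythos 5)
Your proof is correct and follows essentially the same route as the paper: induction on $n$, an appeal to Lemma \ref{lem:compositionrealanalytic} for real analyticity of the composition, and the chain rule at the fixed point $x^\ast$. Your additional remark that the inverse $h_{n-1}^{-1}\circ h^{-1}$ is real analytic (via the real analytic inverse function theorem) is a reasonable extra bookkeeping step that the paper simply subsumes into its notion of automorphism, so it does not constitute a different approach.
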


\begin{proof}
As $h_n$ is the composition of real analytic automorphisms of $(-1,1)$, it is a real analytic automorphism of $(-1,1)$ (see Lemma \ref{lem:compositionrealanalytic}). Clearly it fixes $x^\ast$. Then, by induction and the chain rule, we have 
\[
h'_n(x^\ast)\ = \ (h \circ h_{n-1})'(x^\ast) \ = \ h'(h_{n-1}(x^\ast)) \cdot h_{n-1}'(x^\ast), \] and by induction $h_{n-1}'(x^\ast) = h'(x^\ast)^{n-1}$, completing the proof.
\end{proof}

Thus to show that the derivative can be arbitarily large at the origin, it suffices to find one real analytic automorphism with derivative exceeding 1. There are many examples we can use; a particularly simple one is $h(x) = \sin(\pi x/2)$, with $h'(0) = \pi/2 > 1$.

First, $h$ is real analytic asit is the composition of the functions $\sin(x)$ and $\pi x/2$, both of which are real analytic everywhere. Second, it is an automorphism of $(-1,1)$ since $h'(x) =\cos(\pi x/2) \cdot  \pi/2$ is positive on the entire interval. As such, $h$ is monotone increasing, hence injective. Moreover, $h(-1) = -1$, $h(1) = 1$, so it is also surjective by the Intermediate Value Theorem. Thus $h$ is an automorphism of $[-1,1]$. If we delete $-1$ and $1$ from the domain of $h$, the image is now missing $h(-1)$ and $h(1)$. Thus $h$ is also an automorphism of $(-1,1)$. 
% One can view the first few iterations of the function on Desmos \bburl{https://www.desmos.com/calculator/mvqlt84gbz},
For the first few iterations of $h_n$, see Figure \ref{fig:compositionexample} \cite{SM}.

\begin{figure}[h]
\begin{center}
\scalebox{0.8}{\includegraphics{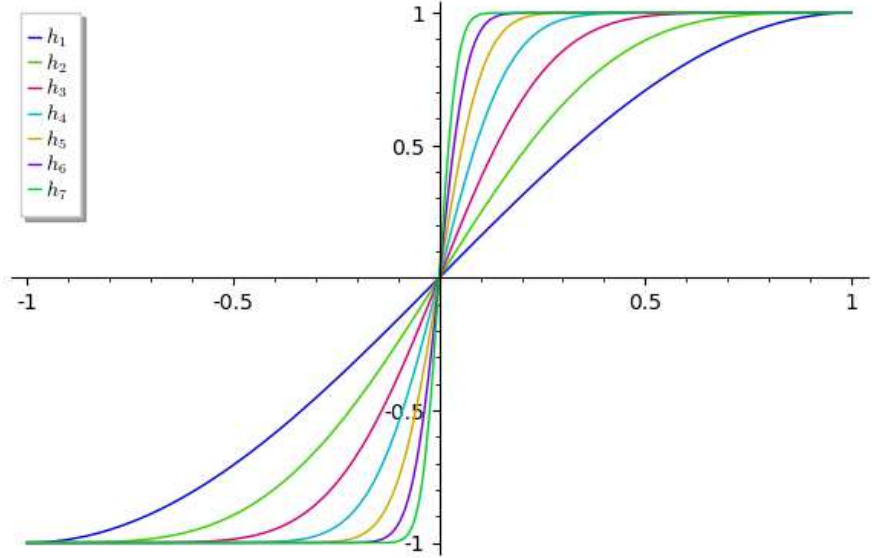}}
\caption{\label{fig:compositionexample} The first few iterations arising from $h_1 = \sin(\pi x/2)$.}
\end{center}
\end{figure}

%Additionally, $h'(0) = \frac{\pi}{2}$. If we define $h_n$ as in the lemma above, we have $h'_n(0) = (\pi/2)^n$. As $\pi/2 > 1$, we have $(\pi/2)^n\to \infty$ and thus there are real analytic automorphisms of $(-1,1)$ with arbitrarily large derivatives at 0.

\ \\

We end this section with the needed result and proof.

\begin{lem}\label{lem:compositionrealanalytic}
Let $f,g$ be real analytic automorphisms of $(-1,1)$. Then the composition $f\circ g$ is also a real analytic automorphism of $(-1,1)$.
\end{lem}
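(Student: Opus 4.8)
The statement has two independent parts. That $f \circ g$ is a bijection of $(-1,1)$ onto itself is immediate: $g$ maps $(-1,1)$ bijectively onto $(-1,1)$, on which $f$ acts bijectively, and a composition of bijections is a bijection. (If one additionally insists that an automorphism have real analytic inverse, note $(f\circ g)^{-1} = g^{-1}\circ f^{-1}$, so that too reduces to the claim that a composition of real analytic functions is real analytic.) Hence the plan is to prove the latter, and it suffices to do so locally: fix an arbitrary $x_0 \in (-1,1)$ and show $f\circ g$ agrees with a convergent power series near $x_0$.

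Set $y_0 := g(x_0)$. Real analyticity of $g$ at $x_0$ furnishes $r>0$ and coefficients $b_n$ with $g(x) = \sum_{n\ge 0} b_n (x-x_0)^n$ (so $b_0 = y_0$) for $|x-x_0|<r$; real analyticity of $f$ at $y_0$ furnishes $\rho>0$ (shrunk if needed so that $(y_0-\rho,y_0+\rho)\subseteq(-1,1)$) and coefficients $a_m$ with $f(y)=\sum_{m\ge 0} a_m (y-y_0)^m$ for $|y-y_0|<\rho$, and since a power series and the series of absolute values of its coefficients have the same radius of convergence, $\sum_{m\ge 0}|a_m| t^m < \infty$ for $0\le t<\rho$. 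The first genuine step is to choose the neighborhood of $x_0$ correctly: the map $s\mapsto \sum_{n\ge 1}|b_n| s^n$ is continuous on $[0,r)$ and vanishes at $s=0$, so there is $\delta\in(0,r)$ with $\sum_{n\ge 1}|b_n|\,|x-x_0|^n < \rho$ whenever $|x-x_0|<\delta$.

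Now fix $x$ with $|x-x_0|<\delta$. Then $|g(x)-y_0|<\rho$, so
\[ f(g(x)) \ = \ \sum_{m\ge 0} a_m\,(g(x)-y_0)^m \ = \ \sum_{m\ge 0} a_m\Big(\sum_{n\ge 1} b_n (x-x_0)^n\Big)^m . \]
Expanding each inner $m$-th power by the multinomial theorem rewrites the right-hand side as an iterated sum of monomials in $(x-x_0)$; collecting the terms of each fixed degree $k$ — only finitely many, arising from $m\le k$ together with the ways of writing $k$ as an ordered sum of $m$ positive integers — yields a candidate power series $\sum_{k\ge 0} c_k (x-x_0)^k$ with $c_0 = f(y_0)$ and each $c_k$ $(k\ge 1)$ a finite polynomial expression in the $a$'s and $b$'s (of Fa\`a di Bruno type). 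I expect the only real work to be justifying that this regrouping is legitimate. Replacing every coefficient by its absolute value and $(x-x_0)$ by $|x-x_0|$ bounds the whole iterated sum, term by term, by $\sum_{m\ge 0}|a_m|\big(\sum_{n\ge 1}|b_n|\,|x-x_0|^n\big)^m$, which is finite precisely because we arranged $\sum_{n\ge 1}|b_n|\,|x-x_0|^n<\rho$. Absolute convergence of the family of all monomial terms then licenses the rearrangement theorem (Fubini for sums), giving $f(g(x)) = \sum_{k\ge 0} c_k (x-x_0)^k$ with this series converging for every $|x-x_0|<\delta$. Thus $f\circ g$ coincides with a convergent power series on a neighborhood of $x_0$; as $x_0$ was arbitrary, $f\circ g$ is real analytic on $(-1,1)$, and combined with the bijectivity observation the lemma follows.

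Finally, I would mention a slicker alternative that makes the one delicate point transparent: a function on an open interval $I\subseteq\R$ is real analytic if and only if it extends to a holomorphic function on some open neighborhood of $I$ in $\C$; then near each $x_0$ one simply composes the holomorphic extension of $g$ with that of $f$, shrinking the domain so the image of $g$'s extension lands inside the domain of $f$'s extension (possible by continuity, since $g(x_0)=y_0$ lies in that domain), and holomorphy of the composite is automatic. The power series argument above is the more elementary and self-contained route, but it is essentially the same ``matching of domains'' issue in real-variable clothing.
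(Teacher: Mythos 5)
Your proof is correct, and it takes a genuinely different route from the paper's. The paper handles bijectivity the same way you do (via $g^{-1}\circ f^{-1}$), but for analyticity it writes $f$ and $g$ as single power series about $0$, extends them to the unit disk in $\mathbb{C}$, and invokes the fact that a composition of holomorphic functions is holomorphic, hence analytic, restricting back to the real axis. Your argument is instead the elementary, purely real-variable one: work locally at an arbitrary $x_0$, choose $\delta$ so that the majorant $\sum_{n\ge 1}|b_n|\,|x-x_0|^n$ stays below the radius $\rho$ of $f$'s series at $g(x_0)$, and then use absolute convergence to justify the Fa\`a di Bruno--type regrouping into a power series in $(x-x_0)$. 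What each buys: the paper's route is shorter once complex analysis is available, while yours is self-contained and, importantly, more careful in one respect --- the paper's global extension tacitly assumes the Taylor series of $f$ and $g$ at $0$ converge on all of $(-1,1)$, which a general real analytic automorphism need not satisfy (its complex singularities can lie within distance less than $1$ of the origin, e.g.\ a suitably rescaled $x+\epsilon\,x/(x^2+a^2)$ with small $a$), whereas your local argument, like your closing remark about composing \emph{local} holomorphic extensions near each point, needs no such assumption and proves the lemma in full generality. The only cosmetic note is that the hypothesis ``automorphism'' is used solely for the bijectivity half; your analyticity argument correctly uses only real analyticity of $f$ and $g$.
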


\begin{proof}
Since $g^{-1}\circ f^{-1}$ is clearly a two-sided inverse of $(f\circ g)$, we have $f\circ g$ is an isomorphism of $(-1,1)$. It can be proven in a number of ways that the composition $f\circ g$ must be real analytic, one nice path is through complex analysis.

As the functions are real analytic, we may write $$f(x)\  \ = \sum_{m=0}^\infty a_m x^m  \ \ \ {\rm and}\ \ \  g(x)\ =\ \sum_{n=0}^\infty b_nx^n.$$ We can extend these functions to the unit disk since the sums converge absolutely when $|x|<1$. We then have that $f(z)$ and $g(z)$ are holomorphic functions and therefore so is $(f\circ g)(z)$. In particular, $(f\circ g)(z)$ must equal its Taylor series on $(-1,1)$ as holomorphic functions are analytic. As it can only take real values when $z$ is real (i.e., $z = x$), we find $f\circ g$ is real analytic.
\end{proof}

%%%%%%%%%%%%%%%%%%%%%%%%%%%%%%%%%%%%%%%%%%%%%%%%%%%%%%%%%%%%%%%%%%%%%%%%%%%%%%%%%%%%%%%%%%%%%%%%%%%%%%%%%%%%%%%%%%%%%%%%%%%%%%
%%%%%%%%%%%%%%%%%%%%%%%%%%%%%%%%%%%%%%%%%%%%%%%%%%%%%%%%%%%%%%%%%%%%%%%%%%%%%%%%%%%%%%%%%%%%%%%%%%%%%%%%%%%%%%%%%%%%%%%%%%%%%%
%%%%%%%%%%%%%%%%%%%%%%%%%%%%%%%%%%%%%%%%%%%%%%%%%%%%%%%%%%%%%%%%%%%%%%%%%%%%%%%%%%%%%%%%%%%%%%%%%%%%%%%%%%%%%%%%%%%%%%%%%%%%%%
\section{Arctangent and Attaining All Values}

By considering a family related to the $\arctan$ function, we prove the following.

\begin{thm} For any real $a$ there exists a real analytic automorphism of $(-1, 1)$ with derivative $a$ at the origin. 
\end{thm}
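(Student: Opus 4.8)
The section title points to the family $f_c(x) := \arctan(cx)/\arctan(c)$, and the plan is to combine this family, its inverses, a reflection, and one ad hoc example to hit every real derivative. First I would reduce to $a \ge 0$: if $g$ is a real analytic automorphism of $(-1,1)$ with $g'(0) = -a$, then $-g$ is a real analytic automorphism of $(-1,1)$ with $(-g)'(0) = a$, so it is enough to realize every $a \ge 0$. The value $a=1$ is realized by the identity and $a = 0$ by the polynomial $x \mapsto x^3$ (a real analytic bijection of $(-1,1)$ whose derivative vanishes at the origin), so the work is to realize every $a \in (0,1)$ and every $a \in (1,\infty)$.

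For $a > 1$ I would check that each $f_c$ with $c > 0$ is a real analytic automorphism of $(-1,1)$, following the template used for $\sin(\pi x/2)$ in Section 2: $\arctan$ is real analytic on all of $\R$ and $x \mapsto cx$ is a polynomial, so $f_c$ is real analytic; $f_c'(x) = \frac{c}{\arctan(c)(1+c^2x^2)} > 0$ on $(-1,1)$, so $f_c$ is strictly increasing and injective; and $f_c(\pm 1) = \pm 1$ gives surjectivity onto $(-1,1)$ by the Intermediate Value Theorem after deleting the endpoints. Then $f_c'(0) = c/\arctan(c)$. Since $\arctan(c) \le c$ with $c/\arctan(c) \to 1$ as $c \to 0^+$ and $c/\arctan(c) \to \infty$ as $c \to \infty$ (because $\arctan(c) \to \pi/2$), the continuous function $c \mapsto c/\arctan(c)$ takes every value in $(1,\infty)$, by the Intermediate Value Theorem applied to a suitable closed interval of parameters. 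This realizes every $a \in (1,\infty)$.

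For $a \in (0,1)$ I would pass to the inverse maps. Since $f_c'$ is nonvanishing on $(-1,1)$, the inverse function theorem gives that $f_c^{-1}$ is real analytic; it is visibly a bijection of $(-1,1)$, hence a real analytic automorphism, with $(f_c^{-1})'(0) = 1/f_c'(0) = \arctan(c)/c$ (explicitly, $f_c^{-1}(y) = \frac1c \tan(y \arctan c)$, which is real analytic because $y\arctan c \in (-\pi/2,\pi/2)$ for $y \in (-1,1)$). As $c$ runs over $(0,\infty)$, the quantity $\arctan(c)/c$ runs continuously, staying in $(0,1)$, with limits $1$ as $c\to 0^+$ and $0$ as $c\to\infty$, so again by the Intermediate Value Theorem every value in $(0,1)$ is attained. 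Assembling the cases $a<0$, $a=0$, $a\in(0,1)$, $a=1$, and $a>1$ completes the proof. None of these steps hides a hard estimate; the only real care needed is the bookkeeping of which parameter range supplies which $a$, together with the observation that the two boundary values $a=0$ and $a=1$ lie outside the open parameter windows and must be supplied separately by $x^3$ and the identity.
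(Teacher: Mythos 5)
Your proposal is correct and follows essentially the same route as the paper: reduce to $a\ge 0$ via negation, handle $a=0$ and $a=1$ with $x^3$ and the identity, and use a normalized $\arctan$ family with an Intermediate Value Theorem argument in the parameter for $a>1$ and a $\tan$ family for $0<a<1$. Your only variation is obtaining the $0<a<1$ case as the inverse of the $\arctan$ family, but the explicit inverse $\frac{1}{c}\tan(y\arctan c)$ is exactly the paper's scaled tangent family, so the arguments coincide (and your appeal to real analyticity of $\arctan$ on $\R$ is, if anything, stated more carefully than the paper's radius-of-convergence remark).
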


\begin{proof}
It suffices to show that we can attain any non-negative value $a$, as replacing $f(x)$ with $-f(x)$ yields negative values. 

Further, since $f(x) = x^3$ satisfies the conditions and has $f'(0) = 0$, we see we need only attain positive values. Finally, since $f(x) = x$ works and has $f'(0) = 1$, we are reduced to finding an automorphism $f$ such that $f'(0) = a$ for any positive $a \neq 1$. \\ \

\noindent \textbf{Case I: $a > 1$:} Consider functions of the form
\begin{eqnarray}
f_{a,b}(x) & \ = \  & \frac{a}{b}\arctan\left(bx\right) \nonumber\\
f_{a,b}'(x) & = & \frac{a}{1+(bx)^2}\nonumber\end{eqnarray}

Clearly, \(f_{a.b}'(0)=a\). We claim that for any \(a>1\), there exists a positive \(b^\ast\) such that \(f_{a,b^\ast}(1)=1\) and \(f_{a,b^\ast}(-1)=-1\). To see this, note that \(f_{a,b}(1)=1\) if \(a= b / \arctan(b)\). The function \(h(b)= b / \arctan(b)\) takes every value greater than 1 by inspection, so for any \(a>1\), there is a value of \(b\) that guarantees the function takes the value 1 at 1. Since the function is odd, this means it takes the value $-1$ at $-1$. Since the derivative is nowhere zero and the function is continuous everywhere on \([-1,1]\), the function must take every value in \([-1,1]\) at least once for \(x\in[-1,1]\) by the Intermediate Value Theorem, and can never take the same value more than once since it is a strictly increasing function. Putting these together, it must be an automorphism from the unit interval to itself. Additionally, since \(\arctan(x)\) is analytic and has a Taylor series which converges for all \(x\), \(\arctan(bx)\) also must have a Taylor series which converges for all \(x\) when \(b\) is finite, since its Taylor series has a radius of convergence of \(R/b\) if the radius of convergence of the original Taylor series is \(R\). Thus these families of functions give analytic functions with any derivative \(a>1\) at the origin and are automorphisms from \((-1,1)\) to itself.
\begin{figure}
    \centering
    \includegraphics{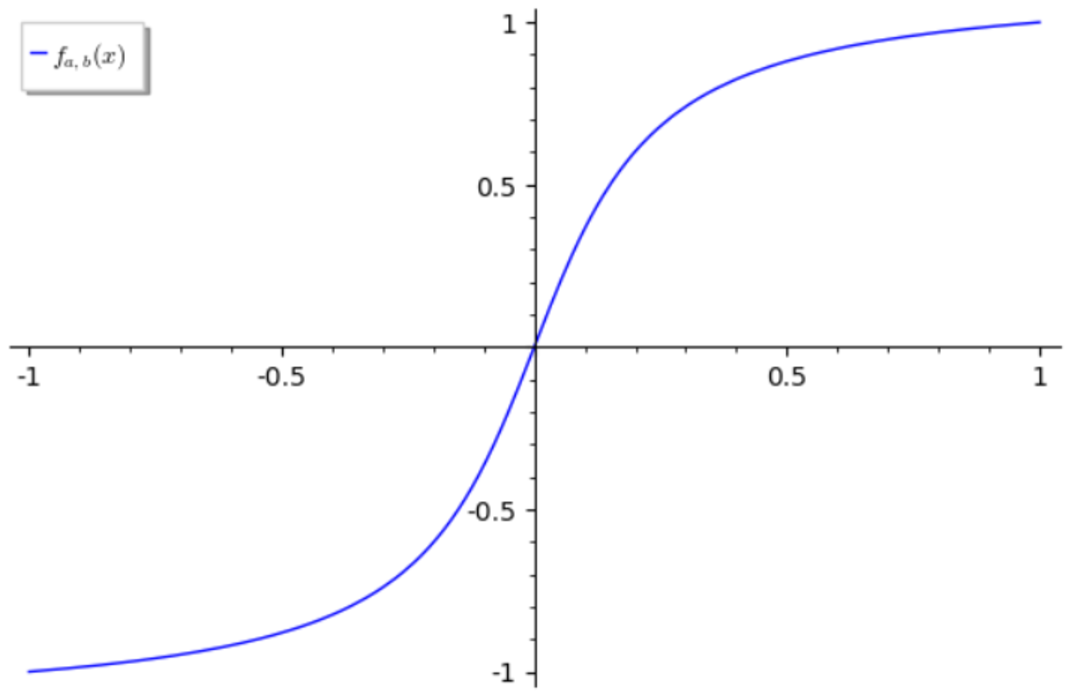}
    \caption{$f_{a,b}$ for $a = 4$ \cite{SM}}
    \label{fig:exact-case1}
\end{figure}\mbox{}\\
\noindent \textbf{Case II: $0 < a < 1$:} Consider 
\begin{eqnarray} g_{a,b}(x) &  \ = \ & \frac{a}{b}\tan(bx)\nonumber\\
g_{a,b}'(x) & =  & a\sec^2(bx) \ \ \ \implies\ \ \  g_{a,b}'(0) \ = \ a\nonumber
\end{eqnarray}

Given any \(0<a<1\), we claim we can select a \(b\) such that \(g_{a,b}(1)=1\) and \(g_{a,b}\) is continuous on \((-1,1)\). To see this, note that \(g_{a,b}(1)=1\) when \(b/\tan(b) =a\). Since \(b/\tan(b)\) takes every value in \((0,1)\) for \(b < \pi/2\) by inspection, we can find a \(b^\ast\) that makes \(g_{a,b^\ast}(1)=1\) for any \(0<a<1\). Also, since \(0 < b < \pi/2\), the function is strictly increasing on \([-1,1]\) and is well-defined and continuous on the interval. Since we have a continuous, strictly increasing function satisfying \(g_{a,b^\ast}(-1)=-1\) and \(g_{a,b^\ast}(1)=1\), by the same arguments applied earlier for \(f_{a,b^\ast}\), this means we have found a family of automorphisms from \((-1,1)\) to itself with derivatives \(0 < a < 1\). Since the Taylor series about \(x=0\) for \(\tan(x)\) has a radius of convergence of \(\pi/2\), the radius of convergence for the Taylor series of \(g_a(x)\) must be \(\pi/2b\). Since \(0 < b < \pi/2\), the radius of convergence is at least one, and so \(g_a\) is analytic on the unit interval. 
\end{proof}
\begin{figure}
    \centering
    \includegraphics{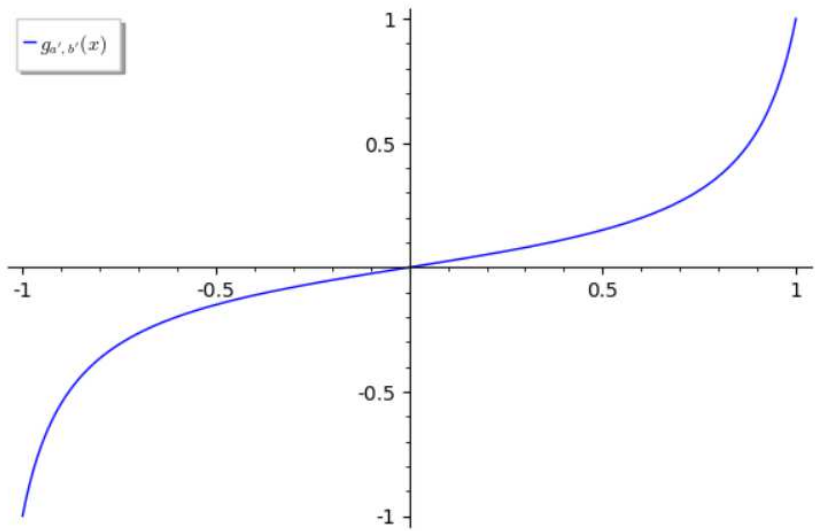}
    \caption{$g_{a',b'}$ for $a' = 1/4$ \cite{SM}}
    \label{fig:exact-case2}
\end{figure}
%We know \(x\) and \(-x\) are analytic automorphisms for the unit interval, and have derivatives of \(\pm 1\) at 0. Finally, \(x^3\) is an analytic automorphism on the unit interval with derivative 0 at 0. We have then found, for any real number \(a\), an analytic function on \((-1,1)\) which is an automorphism from that interval to itself, and satisfies that the derivative of the function at zero has value \(a\) and that the value of the function at zero is zero. There are then no restrictions on the value that the derivative of an analytic automorphism from the unit interval to itself must take at \(x=0\), even if we specify that the function takes the value zero at zero.

%%%%%%%%%%%%%%%%%%%%%%%%%%%%%%%%%%%%%%%%%%%%%%%%%%%%%%%%%%%%%%%%%%%%%%%%%%%%%%%%%%%%%%%%%%%%%%%%%%%%%%%%%%%%%%%%%%%%%%%%%%%%%%%%%%%%%%%%%%%%%%%%%%%
%%%%%%%%%%%%%%%%%%%%%%%%%%%%%%%%%%%%%%%%%%%%%%%%%%%%%%%%%%%%%%%%%%%%%%%%%%%%%%%%%%%%%%%%%%%%%%%%%%%%%%%%%%%%%%%%%%%%%%%%%%%%%%%%%%%%%%%%%%%%%%%%%%%
%%%%%%%%%%%%%%%%%%%%%%%%%%%%%%%%%%%%%%%%%%%%%%%%%%%%%%%%%%%%%%%%%%%%%%%%%%%%%%%%%%%%%%%%%%%%%%%%%%%%%%%%%%%%%%%%%%%%%%%%%%%%%%%%%%%%%%%%%%%%%%%%%%%

\section{Comments and Future Work}

The examples here show, yet again, how different the real and complex cases are. One can ask similar questions about other ingredients in the proof of the Riemann Mapping Theorem. For example, another input is that if one has a sequence of injective holomorphic functions that converge uniformly on compact sets to a holomorphic function, then the limit function is either constant or injective. Does a similar result hold in the real setting? 

% If we interpret holomorphic as just differentiable once, then the answer is no. Consider the function $$\twocase{f(x) \ := \ }{\exp(-1/x^2}{if $x \neq 0$}{0}{if $x = 0$},$$ and the family given by $$g_t(x) \ := \  (1-t) x + t f(x), \ \ \ t \in [0,1].$$ Note that $g_t(x)$ interpolates between the odd function $x$ at $t=0$ (which is cleary injective) and the even function $f(x)$ at $t=1$ (which is clearly not injective). If we can show that $g_t(x)$ is injective for every $t < 1$ then we will have found a sequence of injective differentiable functions which converge on compact sets to a non-constant, non-injective function. 

% Clearly $g_t(x)$ is injective on $[0,1]$ for all $t$, as it is the sum of two increasing functions. There are two ways $g_t(x)$ could fail to be injective; for some $t$ there could be a negative value of $x$ such that $g_t(x) \ge 0$, or there could be two negative values of $x$ such that $g_t(x_1) = g_t(x_2)$. The first possiblity cannot happen, for if $g_t(x) \ge 0$ for some negative $x$ then we could always find another $x$, say $x_0$, such that $g_t(x_0) = 0$. This would mean $$(1-t) x_0 \ = \ -t \exp(-1/x_0^2),$$ Solving (remembering that $x_0 < 0$) gives $$\frac{1-t}{t} \ = \ -x_0 \exp(-1/x_0^2).$$ \fix{Unfortunately this shows that our function $g_t$ is not injective. Note as $t$ runs from 0 to 1 the left hand side runs from infinity to zero. Thus as the right hand side is a positive real number for $x_0 < 0$, we see there is always a choice of $t$ such that $g_t(x_0) = 0$ for a given $x_0 < 0$. Thus these functions are NOT injective.}

If we interpret holomorphic as infinitely differentiable, then the answer is no. We consider the function $$\twocase{f(x) \ := \ }{\exp(-1/x^2)}{if $x > 0$}{0}{if $x\leq 0$.}$$ This function is neither constant nor injective, but the sequence of functions $f_n(x):= f(x) + x/n$ are monotone increasing, hence injective everywhere. Restricted to a compact domain, say $[-a, a]$, we have that $f_n\to f$ uniformly. This part of the Riemann mapping theorem will fail also. 

% just differentiable once, then the answer is no. Consider the function $$f_a(x) \ := \ ax +  \exp\left(-1/(a^2 + x^2)\right). $$ If we look at the family of functions $f_{1/n}(x)$ these converge to the function There is a critical threshold where injectivity is lost. Figure \ref{fig:injectiveloss}  shows it is injective at $a = 1$ (or $n=1$) but not at $a = .5$ (or $n = 2$); if we wish to have $n \to \infty$ if $a^\ast$ is the critical point, we can consider $f_{a^\ast + 1/n}$ or $f_{a^\ast - 1/n}$.  While $f$ is infinitely differentiable, all of its derivatives at zero are zero, and thus it is not real analytic as it does not equal its Taylor series expansion.

\begin{figure}[h]
\begin{center}
\scalebox{0.8}{\includegraphics{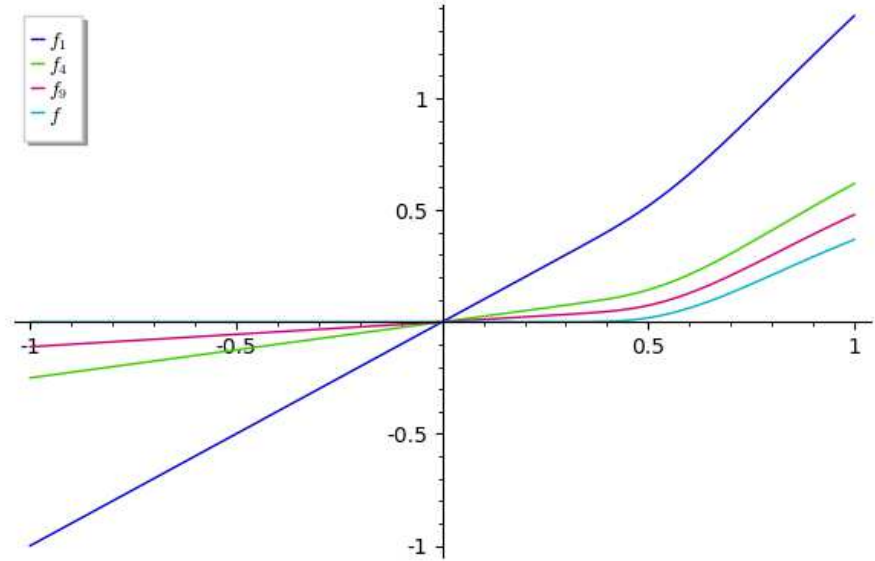}}
\caption{\label{fig:injectiveloss} Plot of the functions $f_1,f_4,f_9,f$ on $[-1,1]$ \cite{SM}. The first three of these are injective, however the fourth is not.}
\end{center}
\end{figure}

For another example (which is differentiable albeit not infinitely differentiable), consider the function  $$\twocase{g_n(x) \ := \ }{-\frac{x^3}{3n} - \frac{x^2}{2n}}{if $-1 < x \le 0$}{\frac{x^2}{2}}{if $0 \le x < 1$,}$$ so that $$\twocase{g_n'(x) \ = \ }{\frac{-x(x+1)}{n}}{if $-1 < x \le 0$}{x}{if $0 \le x < 1$.}$$ Note $g_n$ is continuous and differentiable, and for any finite $n$ we have $g_n'(x) > 0$ except at $x = 0$ and $x=-1$, where it vanishes; see Figure \ref{fig:millernoninjlimit}. As $n \to \infty$ we have $g_n(x)$ converges to $g(x)$, which is given by $$\twocase{g(x) \ := \ }{0}{if $-1 < x \le 0$}{\frac{x^2}{2}}{if $0 \le x < 1$.}$$ Thus we have a sequence of injective functions whose limit is neither injective nor constant. It is worth noting that this would not be a counter-example if we consider $g_n$ and $g$ to be functions of $z$; while the split definition yields a continuous function of a real variable, it does not of a complex one.

\begin{figure}[h]
\begin{center}
\scalebox{0.8}{\includegraphics{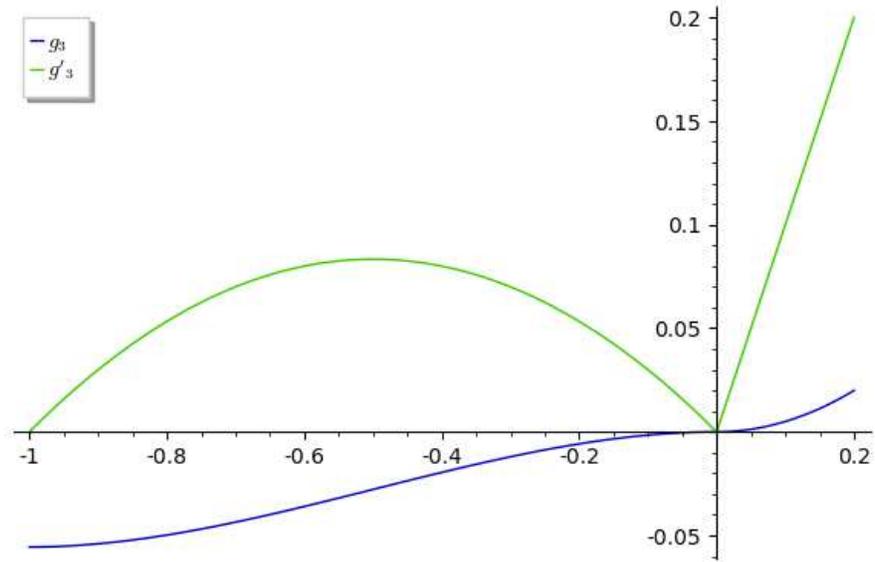}}
\caption{\label{fig:millernoninjlimit} Plot of $g_n, g'_n$ when $n=3$ \cite{SM}}
\end{center}
\end{figure}

%fp[x_, n_] := If[ x <= 0, -x (x + 1)/n, x];
%f[x_, n_] := If[x <= 0, -x^3/(3 n) - x^2/(2 n), x^2/2];
%fpp[x_, n_] := If[x <= 0, -2 x/n - 1/n, 1];
%Manipulate[Plot[{f[x, n], fp[x, n]}, {x, -c, .1}], {n, 1, 100}, {c, 1, .01}]

For both of our examples, while our functions are differentiable they are not real analytic; \emph{can one find a sequence of real analytic functions which converges uniformly on compact sets whose limit is a non-constant, non-injective function?}

%It would be interesting to explore whether this result holds when the sequence of functions $f_n$ are all real analytic. 
% Since any continuous function on a compact set is a uniform limit of polynomials by the Weirstrauss approximation theorem, the limit of the $f_n$ need not be real analytic. 
%%%CODE
%A[x_, a_] := If[x != 0, Exp[-1/(a^2 + x^2)], Exp[-1/a^2]]
%B[x_, a_] := a x + A[x, a];
%dB[x_, a_] := 
%  a  + If[x != 8989, ((2 x) /(a^2 + x^2)^2) Exp[-1/x^2], 0];
%Manipulate[Plot[{B[x, a], dB[x, a]} , {x, -1, 1}], {a, 1, .001}]

%%%%%%%%%%%%%%%%%%%%%%%%%%%%%%%%%%%%%%%%%%%%%%%%%%%%%%%%%%%%%%%%%%%%%%%%%%%%%%%%%%%%%%%%%%%%%%%%%%%%%%%%%%%%%%%%%%%%%%%%%%%%%%%%%%%%%%%%%%%%%%%%%%%
%%%%%%%%%%%%%%%%%%%%%%%%%%%%%%%%%%%%%%%%%%%%%%%%%%%%%%%%%%%%%%%%%%%%%%%%%%%%%%%%%%%%%%%%%%%%%%%%%%%%%%%%%%%%%%%%%%%%%%%%%%%%%%%%%%%%%%%%%%%%%%%%%%%
%%%%%%%%%%%%%%%%%%%%%%%%%%%%%%%%%%%%%%%%%%%%%%%%%%%%%%%%%%%%%%%%%%%%%%%%%%%%%%%%%%%%%%%%%%%%%%%%%%%%%%%%%%%%%%%%%%%%%%%%%%%%%%%%%%%%%%%%%%%%%%%%%%%
\bibliographystyle{alpha}

\begin{thebibliography}{A999}

\bibitem[MT]{MT} 
S. J. Miller and D. Thompson, \emph{The real analogue of the Schwarz lemma}, American Mathematical Monthly \textbf{118} (October 2011), Number 8, page 725.

\bibitem[SM]{SM}
SageMath, \emph{the Sage Mathematics Software System} (Version 9.1),
The Sage Developers, 2021, \href{https://www.sagemath.org}{SageMath, the Sage Mathematics Software System (Version x.y.z),
   The Sage Developers, YYYY, https://www.sagemath.org.}.

\bibitem[SS]{SS}
E. Stein and R. Shakarchi, \emph{Complex Analysis}, Princeton University Press, Princeton, NJ, 2003.
\end{thebibliography}

\bigskip

\end{document}